\theoremstyle{plain}
\newtheorem{thm}{Theorem}
\newtheorem{prop}[thm]{Proposition}
\theoremstyle{definition}
\newtheorem{rem}[thm]{Remark}
\newcommand{\F}{\mathbb{F}}
\newcommand{\aut}{\mathrm{Aut}}
\newcommand{\out}{\mathrm{Out}\,}
\newcommand{\inn}{\mathrm{Inn}\,}
\newcommand{\sln}{\mathrm{SL}_n}
\newcommand{\bb}[1]{\mathbb{#1}}
\def\ps@pprintTitle{%
  \let\@oddhead\@empty
  \let\@evenhead\@empty
  \let\@oddfoot\@empty
  \let\@evenfoot\@oddfoot
}
\title{On homogeneous spaces with finite anti-solvable stabilizers}
\author{Giancarlo Lucchini Arteche}
\address{Departamento de Matem\'aticas, Facultad de
Ciencias, Universidad de Chile\\ Las Palmeras 3425, \~Nu\~noa, Santiago, Chile}
\email{luco@uchile.cl}
\thanks{The author was partially supported by Fondecyt Grant 1210010.}
\keywords{homogeneous spaces, rational points, non-abelian cohomology, finite simple groups}
\begin{document}

\begin{abstract}
We say that a group is anti-solvable if all of its composition factors are non-abelian. We consider a particular family of anti-solvable finite groups containing the simple alternating groups for $n\neq 6$ and all 26 sporadic simple groups. We prove that, if $K$ is a perfect field and $X$ is a homogeneous space of a smooth algebraic $K$-group $G$ with finite geometric stabilizers lying in this family, then $X$ is dominated by a $G$-torsor. In particular, if $G=\sln$, all such homogeneous spaces have rational points.\\

\noindent\textbf{MSC codes:} 14M17, 12G05, 14G05.
\end{abstract}

\maketitle

We say that a finite group $F$ is \emph{aut-split} if the exact sequence
\[1\to\inn(F)\to\aut(F)\to\out(F)\to 1,\]
splits, where $\inn(F)$ denotes the group of inner automorphisms of $F$ and $\out(F)$ is simply defined as the quotient $\aut(F)/\inn(F)$. A full characterization of aut-split finite simple groups was given in \cite{LMM}. These include the alternating group $A_n$ for $n=5$ or $n\geq 7$ and all 26 sporadic groups. As for groups of Lie type, some notation is needed. If $F$ is a finite simple group of Lie type over $\F_q$ with $q=p^m$ and $p$ prime, denote by $d$ the order of the group of diagonal automorphisms (i.e.~those induced by conjugation by diagonal matrices) modulo inner automorphisms. Then $F$ is aut-split if and only if one of the following holds:
\begin{itemize}
\item $F$ is a Chevalley group, not of type $D_{\ell}(q)$, and $(\frac{q-1}{d},d,m)=1$;
\item $F=D_\ell(q)$ and $(\frac{q^\ell-1}{d},d,m)=1$;
\item $F$ is a twisted group, not of type $^2D_{\ell}(q)$, and $(\frac{q+1}{d},d,m)=1$;
\item $F={}^2D_\ell(q)$ and either $\ell$ is odd or $p=2$.
\end{itemize}

We say that $F$ is \emph{anti-solvable} if all of its composition factors (which are simple groups) are non-abelian. In this note we are interested in anti-solvable groups whose composition factors are aut-split (i.e.~they belong to the list above). This includes of course the groups in the list themselves. The main result of this note is the following.

\begin{thm}\label{thm HS}
Let $K$ be a perfect field and let $G$ be a smooth algebraic $K$-group. Let $X$ be a homogeneous space of $G$ with geometric stabilizer $\bar F$. Assume that $\bar F$ is finite, anti-solvable, and that all of its composition factors are aut-split. Then $X$ is dominated by a $G$-torsor. In particular, if $G=\sln$, then $X$ has a rational point.
\end{thm}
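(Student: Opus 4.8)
The plan is to reduce the problem to a cohomological statement about lifting a certain class through a group extension, and then to exploit the anti-solvable/aut-split hypotheses to produce the lift. Let me think through the standard theory of homogeneous spaces with finite stabilizers and see how the aut-split condition enters.

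Recall the setup. A homogeneous space $X$ of $G$ over $K$ with geometric stabilizer $\bar F$ gives rise, after base change to $\bar K$, to an isomorphism $X_{\bar K} \cong G_{\bar K}/\bar F$. The question of whether $X$ is dominated by a $G$-torsor (equivalently, whether $X$ lifts to a torsor) is governed by the so-called "Springer class" or the obstruction living in a non-abelian $H^2$. The key structural fact, going back to work of Springer, Borovoi, Florence, Demarche, and others, is that homogeneous spaces of $G$ with geometric stabilizer $\bar F$ are classified by a gerbe, or equivalently by a certain outer action together with a lifting obstruction.

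Let me think about how to structure the proof. First I would reduce to understanding the obstruction to lifting $X$ to a $G$-torsor. When $\bar F$ is finite, the relevant data is a continuous outer action of the Galois group (or more precisely the "kernel" $\bar{\kappa}$) on $\bar F$, and the obstruction to the existence of a $G$-torsor dominating $X$ lives in a non-abelian cohomology set. The decisive point for lifting problems through the extension $1 \to \inn(F) \to \aut(F) \to \out(F) \to 1$ is precisely whether one can split or lift the outer Galois action to an honest action: if $\bar F$ is aut-split, the sequence splits, so the outer action lifts to a genuine action of the Galois group on $\bar F$, which is exactly what is needed to rigidify the stabilizer and realize $X$ as dominated by a torsor. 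This is the conceptual core.

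So the concrete plan is as follows.

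First I would recall the reduction, due to Florence and Demarche--Lucchini Arteche--Springer-type arguments, that shows: $X$ is dominated by a $G$-torsor if and only if a certain lien (bande/gerbe) $L$ on $\spec K$ bound by $\bar F$ is \emph{neutral}, i.e.~comes from an actual $K$-form of $\bar F$ acting compatibly. Concretely, the homogeneous space determines a class in $H^2(K, \bar F)$ in the non-abelian sense (a gerbe banded by $\bar F$), and neutrality of this gerbe is equivalent to $X$ being covered by a torsor. The outer Galois action $\gal(\bar K/K) \to \out(\bar F)$ attached to $X$ is the "type" of this gerbe.

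Second, I would invoke the devissage along the composition series of $\bar F$. Since $\bar F$ is anti-solvable, its composition factors are non-abelian simple groups $S_1, \dots, S_r$, each of which is assumed aut-split. The strategy is inductive: one filters $\bar F$ and reduces the neutrality of the gerbe banded by $\bar F$ to neutrality of gerbes banded by the simple composition factors, using that the abelianization-type obstructions (which would live in abelian $H^2$ and could be genuinely nonzero) vanish because there are no abelian composition factors. In other words, anti-solvability kills the part of the obstruction that lives in abelian Galois cohomology $H^2(K, A)$ for $A$ a nontrivial abelian group, because every layer of the filtration contributes only non-abelian simple pieces with trivial center.

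The main obstacle, and the place where aut-split is used, is the following. For a non-abelian simple group $S$ with trivial center, a gerbe banded by $S$ is automatically neutral once its band is \emph{abelian}, but here the band need not be abelian; the obstruction to neutrality of a gerbe banded by a centerless group $S$ is controlled by whether the outer action $\gal(\bar K/K) \to \out(S)$ lifts to $\aut(S)$ compatibly across the field. Because $\inn(S) \cong S$ is centerless, the gerbe banded by $S$ with a given outer type is neutral precisely when one can produce a $K$-group form whose geometric points are $S$ and whose outer Galois action is the prescribed one; and such a form exists exactly because the exact sequence
\[
1 \to \inn(S) \to \aut(S) \to \out(S) \to 1
\]
splits, i.e.~$S$ is aut-split. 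The splitting provides a set-theoretic-then-group-theoretic section $\out(S) \to \aut(S)$ that is Galois-equivariant for the trivial reasons (it is defined over the prime field, being a purely group-theoretic construction), and composing the outer action with this section yields the desired genuine action, hence the desired $K$-form, hence neutrality. I would therefore expect the hard part to be the careful bookkeeping of the devissage: showing that the splittings on the simple layers assemble to a splitting, or at least to a neutralization, for the full anti-solvable group $\bar F$, since extensions of aut-split groups need not be aut-split, and one must instead argue layer-by-layer that each successive gerbe obstruction vanishes. Finally, for $G = \sln$, I would conclude using that $\sln$ is special (all $\sln$-torsors over a field are trivial), so that domination by a $G$-torsor immediately yields a rational point on $X$.
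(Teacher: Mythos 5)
Your overall strategy---reduce to neutrality of the Springer class in non-abelian $H^2$, use the aut-split hypothesis to lift the outer Galois action to a genuine action (hence a $K$-form of the stabilizer, hence a neutral class), then d\'evissage, and finally invoke the triviality of $\sln$-torsors---is exactly the paper's, and your identification of where aut-splitness enters is correct. But there is a genuine gap in the d\'evissage, and you flag it yourself without resolving it. You propose to filter $\bar F$ by a composition series and reduce to gerbes banded by the simple composition factors. This does not work as stated: the terms of a composition series are in general neither characteristic nor stable under the outer Galois action, so the lien structure does not descend to the composition factors and there is no induced class to neutralize. The paper instead inducts on a proper nontrivial \emph{characteristic} subgroup $N$ (when one exists): characteristicity guarantees that $\kappa:\Gamma_K\to\out(F)$ induces an outer action on $F/N$, whose unique class is neutral by induction and yields a splitting $s$ of the corresponding extension $E'$ of $\Gamma_K$ by $F/N$; the preimage $\pi^{-1}(s(\Gamma_K))$ in $E$ is then an extension of $\Gamma_K$ by $N$, split again by induction, and composing sections neutralizes the class for $F$. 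This is precisely the ``layer-by-layer'' assembly you describe as the hard part but do not carry out.

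The second missing ingredient is the base case. The induction bottoms out not at simple groups but at \emph{characteristically simple} groups, i.e.~direct products $S^r$ of copies of a single non-abelian simple group $S$, and you need that $S^r$ is itself aut-split when $S$ is. This is not supplied by your (correct) remark that extensions of aut-split groups need not be aut-split; it holds here because every automorphism of $S^r$ permutes the simple factors, so a section $\out(S)\to\aut(S)$ induces one for $S^r$. Two smaller corrections: anti-solvability is used not to ``kill abelian $H^2$ obstructions'' layer by layer, but to guarantee that $\bar F$ and all its characteristic subquotients have trivial center, so the lien is representable and $H^2(K,L)$ has a \emph{unique} class (which is what makes the unique class of the quotient automatically the image of $\eta$); and there is no Galois-equivariance issue for the section $\out(S)\to\aut(S)$---one simply composes the continuous homomorphism $\kappa$ with the abstract group-theoretic section to get a genuine action of $\Gamma_K$.
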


Of course, the particular case of $G=\sln$ is a natural consequence of the triviality of torsors under $\sln$, which is valid over any field $K$. Now, recall that the \emph{Springer class}, which is class in a non-abelian 2-cohomology set associated to every homogeneous space $X$, corresponds to the obstruction to being dominated by a $G$-torsor (cf.~\cite[\S2]{DLA} or \cite[\S1]{FSS}, or \cite{SpringerH2} for the original article by Springer). Using this tool we immediately see that Theorem \ref{thm HS} is implied by the following result on non-abelian Galois cohomology.

\begin{thm}\label{thm H2}
Let $K$ be a perfect field and let $L$ be a finite $K$-lien (or $K$-kernel) whose underlying group is anti-solvable and all of its composition factors are aut-split. Then $H^2(K,L)$ has exactly one class and it is neutral.
\end{thm}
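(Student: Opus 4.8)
The plan is to prove Theorem~\ref{thm H2} by reducing the study of $H^2(K,L)$ for an arbitrary anti-solvable $K$-lien with aut-split composition factors to the case where the underlying group $F$ is itself simple and aut-split, and then to settle that base case directly. Recall that a $K$-lien is represented by a finite group $F$ together with an outer action, i.e.\ a continuous homomorphism $\gal(K) \to \out(F)$, and that $H^2(K,L)$ is the set of equivalence classes of extensions (or, Galois-cohomologically, of $\gal(K)$-kernels with coefficients in $F$) inducing this outer action, with neutrality of a class meaning it comes from an honest group extension admitting a section compatible with the Galois action. The first step is therefore to record the structural fact that lets induction work: a composition (or characteristic) series of $F$ is preserved by all automorphisms, so a $K$-lien on $F$ induces $K$-liens on the successive quotients, and the key is to propagate both existence-and-uniqueness of the $H^2$ class and its neutrality along such a filtration.

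I would first dispose of the \emph{simple} case, which I expect to be the genuine core of the argument. When $F$ is simple, $\inn(F) = F$ since $F$ is centerless, and the aut-split hypothesis gives a splitting $s\colon \out(F) \to \aut(F)$ of $1 \to F \to \aut(F) \to \out(F) \to 1$. Given the outer action $\varphi\colon \gal(K) \to \out(F)$, the composite $s \circ \varphi\colon \gal(K) \to \aut(F)$ is then a genuine continuous action of $\gal(K)$ on $F$ inducing $\varphi$; this splitting is exactly what produces a neutral element of $H^2(K,L)$ (the band extension splits as a semidirect product $F \rtimes \gal(K)$). For uniqueness, I would invoke the standard fact that the obstruction to, and the classification of, liftings is governed by $H^2$ and $H^1$ with coefficients in the center $Z(F)$; since $F$ is simple non-abelian its center is trivial, so $H^i(K, Z(F)) = 0$ and there is exactly one class in $H^2(K,L)$, which by the above is neutral.

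The inductive step is where I expect the main obstacle to lie. Writing a characteristic series with successive quotients that are (products of) simple groups, one wants to say that $H^2$ of the whole lien is controlled by the $H^2$'s of the factors, each of which is a singleton and neutral by the base case. The subtle point is that the standard gluing tool for non-abelian $H^2$ along an extension $1 \to A \to F \to Q \to 1$ is an exact sequence of pointed sets whose transition maps and obstructions live in abelian cohomology of the center of the kernel $A$; when $A$ is itself anti-solvable with trivial center this machinery collapses favorably, but one must check that the outer action of $\gal(K)$ respects the filtration and that neutrality is genuinely transported, not merely existence of \emph{some} extension. Concretely, the hard work is verifying that a neutral class on $Q$ together with a neutral class on $A$ assembles to a neutral class on $F$, and that no new classes appear; because all relevant centers vanish, I expect the obstruction groups $H^2(K,Z(A))$ and the ambiguity group $H^1(K,Z(A))$ to be trivial, forcing both uniqueness and neutrality to lift.

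Finally I would assemble the pieces: by induction on the length of the composition series, using the simple aut-split base case and the vanishing of center cohomology at each stage, every anti-solvable $K$-lien with aut-split factors has $H^2(K,L)$ equal to a single neutral class. The delicate bookkeeping is ensuring that ``aut-split'' is the exact hypothesis needed at each simple quotient to realize the induced outer Galois action by an actual action; I anticipate that making the induction go through cleanly --- in particular handling the outer-action compatibility and the neutrality transport simultaneously --- will require phrasing the inductive hypothesis carefully, perhaps as a statement about \emph{all} $K$-liens on groups of bounded composition length rather than about a single $L$.
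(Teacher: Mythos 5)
Your overall strategy --- handle the aut-split case by composing the outer action $\kappa:\Gamma_K\to\out(F)$ with a section of $\aut(F)\to\out(F)$, get uniqueness from the vanishing of the center, and induct along a Galois-stable filtration --- is the same as the paper's. But two points are genuine gaps. First, your opening claim that a composition series of $F$ is preserved by all automorphisms is false in general (consider the swap on $A_5\times A_5$); you must filter by \emph{characteristic} subgroups, and the irreducible pieces of such a filtration are then characteristically simple groups, i.e.\ direct products of copies of a single simple group, not simple groups. So your base case must cover such products, which requires the additional (easy but necessary) observation that a direct product of copies of an aut-split simple group is itself aut-split, because every automorphism permutes the direct factors. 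The paper makes exactly this reduction and your proposal does not.

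Second, and more seriously, the inductive step is precisely the part you leave as ``the hard work,'' and the machinery you propose to invoke there (an exact sequence of pointed sets in non-abelian $H^2$ with obstructions in the cohomology of the center of the kernel) is neither standard in the form you need nor necessary. The paper's argument is elementary and you would need to supply it (or an equivalent): uniqueness of the class is immediate from $Z(F)=1$ applied to $F$ itself via Springer's Prop.~1.17, with no induction at all; and for neutrality one represents $\eta$ by an extension $1\to F\to E\xrightarrow{\pi}\Gamma_K\to 1$, pushes it to an extension $E'$ of $\Gamma_K$ by $F/N$ for a proper nontrivial characteristic subgroup $N$, splits $E'$ by induction via a section $s$, and then observes that $\pi^{-1}(s(\Gamma_K))$ is an extension of $\Gamma_K$ by $N$ defining a $K$-lien that again satisfies the hypotheses, hence splits by induction; composing the two sections splits $E$. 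Without this concrete transport-of-neutrality step (or a worked-out substitute), the proof is not complete.
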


The rest of this note is devoted to proving Theorem \ref{thm H2}. We start with the particular case where the group is itself aut-split (for instance, if there is only one composition factor).

\begin{prop}\label{prop aut-split}
Let $K$ be a perfect field and let $L$ be a finite $K$-lien (or $K$-kernel) whose underlying group is
aut-split. Then $H^2(K,L)$ has a neutral class (in particular, the $K$-lien $L$ is representable). If moreover $L$ has a trivial center, then this is the only class in $H^2(K,L)$.
\end{prop}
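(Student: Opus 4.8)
The plan is to prove the two assertions separately: the existence of a neutral class (equivalently, the representability of $L$) will come directly from the aut-split hypothesis, while uniqueness in the centerless case will follow from the standard description of non-abelian $H^2$ in terms of the center. Write $L=(\ov F,\kappa)$, where $\ov F$ is the finite underlying group and $\kappa\colon\gal(\ov K/K)\to\out(\ov F)$ is the outer action. First I would use the aut-split hypothesis to fix a splitting $s\colon\out(\ov F)\to\aut(\ov F)$ of the sequence $1\to\inn(\ov F)\to\aut(\ov F)\to\out(\ov F)\to1$. Since $\out(\ov F)$ is finite and discrete, $\kappa$ factors through a finite Galois quotient, so the composite $\rho:=s\circ\kappa\colon\gal(\ov K/K)\to\aut(\ov F)$ is again continuous and defines an honest action of the Galois group on $\ov F$ by automorphisms. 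As $K$ is perfect, Galois descent for finite étale group schemes then produces a finite $K$-group $F$ with $F_{\ov K}\cong\ov F$. I would then check that $\mathrm{lien}(F)=L$: the lien records only the outer part of $\rho$, which is $\ov{s\circ\kappa}=\kappa$ precisely because $s$ is a section. Hence $L$ is representable and the class attached to $F$ is a neutral class in $H^2(K,L)$, which gives the first assertion.

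For the second assertion I would set $Z=Z(\ov F)$, equipped with the $\gal(\ov K/K)$-action induced by $\kappa$ (well defined since $\inn(\ov F)$ acts trivially on the center), and invoke the general theory of bands (Giraud/Springer, cf.\ \cite{FSS}): non-emptiness of $H^2(K,L)$ is obstructed by a class in $H^3(K,Z)$, and whenever $H^2(K,L)\neq\emptyset$ it is a torsor under the abelian group $H^2(K,Z)$. If $Z$ is trivial then $H^2(K,Z)=0$, so a non-empty set $H^2(K,L)$ is a single point; combined with the neutral class already produced, $H^2(K,L)$ then consists of exactly one class, which is neutral.

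I do not expect a serious obstacle, as the statement is essentially a formal consequence of the aut-split hypothesis together with standard cohomological facts. The step needing the most care is verifying that the descended group $F$ carries exactly the lien $L$, which is precisely where the section property of $s$ enters. I would also note that the trivial-center hypothesis by itself already forces representability (the obstruction then lies in $H^3(K,Z)=0$), so aut-splitness is genuinely needed only to produce a neutral class when the center is non-trivial — the case that will arise once $\ov F$ is assembled from several composition factors in the proof of Theorem \ref{thm H2}.
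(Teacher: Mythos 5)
Your proposal is correct and follows essentially the same route as the paper: compose $\kappa$ with a section of $\aut(\ov F)\to\out(\ov F)$ to obtain an honest Galois action lifting the outer action, hence a $K$-form of $\ov F$ and a neutral class, while uniqueness in the centerless case is exactly the paper's appeal to Springer's Prop.~1.17 (the torsor structure of $H^2(K,L)$ under $H^2(K,Z)$). Your additional remarks on continuity and on the $H^3(K,Z)$ obstruction are correct but not needed beyond what the paper already does.
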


\begin{proof}
Let $F$ be the underlying finite group of $L$ and let $\Gamma_K$ be the absolute Galois group of $K$. By the definition of a $K$-lien we have a homomorphism $\kappa:\Gamma_K\to\out(F)$. Since $F$ is aut-split, we can compose this with a section $\out(F)\to\aut(F)$ and get a group action of $\Gamma_K$ on $F$. Since $F$ is finite, this defines a $K$-form of $F$ and, since this action lifts $\kappa$, it corresponds to a neutral class $\eta\in H^2(K,L)$. The second assertion follows immediately from \cite[Prop.~1.17]{SpringerH2} (see also \cite[IV, Thm.~8.8]{MacLane}).
\end{proof}

\begin{proof}[Proof of Theorem \ref{thm H2}]
Let $L$ be a $K$-lien and let $F$ be its underlying group as in Theorem \ref{thm H2}. Since $F$ is anti-solvable, it has trivial center and hence the $K$-lien $L$ is representable (cf.~\cite[IV, Thm.~8.7]{MacLane}). We fix then a class $\eta\in H^2(K,L)$, which is unique again by the triviality of the center and \cite[Prop.~1.17]{SpringerH2}. We need to prove that it is neutral.

If $F$ has no proper nontrivial characteristic subgroups (i.e.~it is \emph{characteristically simple}), then it is a direct product of copies of a single simple group (cf.~\cite[Ch.~2, Thm.~1.4]{Gorenstein}), which has to be aut-split then by hypothesis. Since every automorphism of such a product permutes its direct factors, it is easy to see that $F$ is aut-split as well. We are done then by Proposition \ref{prop aut-split}.

Assume now that $F$ has a proper nontrivial characteristic subgroup $N$. It is clear that both $N$ and $F/N$ also satisfy the hypotheses of Theorem \ref{thm H2}, so we argue by induction. Since $N$ is characteristic, the arrow $\aut(F)\to\aut(F/N)$ is well-defined and thus the homomorphism $\kappa:\Gamma_K\to\out(F)$ gives a homomorphism $\Gamma_K\to\out(F/N)$, which gives us a $K$-lien $L'$ whose underlying group is $F/N$. By induction hypothesis, we know that $H^2(K,L')$ has exactly one class $\eta'$ and that it is neutral. We interpret this in the language of group extensions (cf.~again \cite[\S2.2.2]{DLA} or \cite[1.18]{FSS} or \cite[1.13--14]{SpringerH2}). The class $\eta'$ corresponds to a split extension which, since the arrow $H^2(K,L)\to H^2(K,L')$ clearly sends $\eta$ to $\eta'$, fits into a commutative diagram
\[\xymatrix{
1 \ar[r] & F \ar[r] \ar[d] & \ar[r] E \ar[d]_\pi & \Gamma_K \ar[r] \ar@{=}[d] & 1 \\
1 \ar[r] & F/N \ar[r] & E' \ar[r] & \Gamma_K \ar[r]  \ar@/_1pc/[l]_{s} & 1.
}\]
Consider the preimage $\pi^{-1}(s(\Gamma_K))\subset E$, which fits into the following commutative diagram
\[\xymatrix{
1 \ar[r] & N \ar[r] \ar[d] & \ar[r] \pi^{-1}(s(\Gamma_K)) \ar[d] & \Gamma_K \ar[r] \ar@{=}[d] & 1 \\
1 \ar[r] & F \ar[r] & E \ar[r] & \Gamma_K \ar[r] & 1.
}\]
The top row corresponds to a class in $H^2(K,L'')$ for some $K$-lien $L''$ with underlying group $N$. By induction hypothesis again we know that this class is neutral and hence the extension is split, giving a section $\Gamma_K\to\pi^{-1}(s(\Gamma_K))$ and hence a section $\Gamma_K\to E$ by composition, proving that the class $\eta$ is neutral.
\end{proof}

\begin{rem} In \cite{Bercov}, Bercov characterizes anti-solvable finite groups whose composition factors are aut-split as iterated twisted wreath products of simple groups. Note however that we did not need this result in the proof above.
\end{rem}

\begin{rem}
Theorems \ref{thm HS} and \ref{thm H2} are false if we take away the aut-split hypothesis on the composition factors, already for simple groups. Indeed, consider the group $A_6$ and the order 2 element $\phi\in\out(A_6)$ coming from the nontrivial class of outer automorphisms of $S_6$. Taking $K=\bb R$, we can define a $K$-lien $L$ via the homomorphism $\kappa:\Gamma_{\bb R}\to\out(A_6)$ that sends the nontrivial element in $\Gamma_{\bb R}$ to $\phi$. By the same arguments used above we see that $H^2(K,L)$ has only one class $\eta$. But this class is not neutral, since this would amount to the existence of a lift of $\phi$ to an automorphism $\varphi\in\aut(A_6)$ of order 2, which does not exist. Using \cite[Cor.~3.3]{DLA} we may construct a homogeneous space $X$ of $\sln$ over $\bb R$ whose Springer class is $\eta$. This implies that $X$ is not dominated by a torsor.
\end{rem}

\end{document}